\documentclass[11pt]{article}

\usepackage{fullpage}
\usepackage{amsmath,amsthm,amsfonts,dsfont}
\usepackage{amssymb,latexsym,graphicx}
\usepackage{palatino}
\usepackage{mathpazo}
\usepackage{stmaryrd}
\usepackage{mathtools}
\usepackage{hyperref}

\newtheorem{theorem}{Theorem}[section]

\newtheorem{lemma}[theorem]{Lemma}

\newtheorem{claim}[theorem]{Claim}

\newtheorem{corollary}[theorem]{Corollary}



\renewcommand{\epsilon}{\varepsilon}









\makeatletter
\def\imod#1{\allowbreak\mkern8mu({\operator@font mod}\,\,#1)}
\makeatother

\DeclareRobustCommand{\stirling}{\genfrac\{\}{0pt}{}}










\newcommand{\marginlabel}[1]%
{\mbox{}\marginpar{\it{\raggedleft\hspace{0pt}#1}}}



\renewcommand{\epsilon}{\varepsilon}



\DeclareMathOperator*{\E}{E}









\DeclareMathOperator{\diag}{diag}


\newcommand{\N}{\ensuremath{\mathbb{N}}}

\newcommand{\R}{\ensuremath{\mathbb{R}}}













\makeatletter
\def\imod#1{\allowbreak\mkern10mu({\operator@font mod}\,\,#1)}
\makeatother


\def\diag{\mathrm{diag}}



\newif\ifnewprog\newprogfalse

\ifnewprog

\newlength{\pgmtab}  
\setlength{\pgmtab}{1em}  

\fi


\newcounter{lecnum}

\setlength{\topmargin}{-0.5in}
\setlength{\headsep}{1.5em}
\setlength{\textwidth}{6.5in} 
\setlength{\textheight}{9in}
\setlength{\evensidemargin}{-.1in}
\setlength{\oddsidemargin}{-.1in}


\newlength{\tpush}
\setlength{\tpush}{2\headheight}
\addtolength{\tpush}{\headsep}





%























\DeclarePairedDelimiter\parens{(}{)}
\DeclarePairedDelimiter\bracks{[}{]}
\DeclarePairedDelimiter\braces{\{}{\}}


\newif\ifnotes\notesfalse

\ifnotes
\usepackage{color}
\definecolor{mygrey}{gray}{0.50}
\newcommand{\notename}[2]{{\textcolor{mygrey}{\footnotesize{\bf (#1:} {#2}{\bf ) }}}}
\newcommand{\noteswarning}{{\begin{center} {\Large WARNING: NOTES ON}\end{center}}}

\else

\newcommand{\notename}[2]{{}}
\newcommand{\noteswarning}{{}}

\fi

\begin{document}

\title{A Sharp Tail Bound for the Expander Random Sampler}

\author{
Shravas Rao\thanks{Courant Institute of Mathematical Sciences, New York
 University.}
~\thanks{
    This material is based upon work supported by the National Science Foundation Graduate Research Fellowship Program under Grant No. DGE-1342536.
}
\and
Oded Regev\footnotemark[1]
~\thanks{Supported by the Simons Collaboration on Algorithms and Geometry and by the National Science Foundation (NSF) under Grant No.~CCF-1320188. Any opinions, findings, and conclusions or recommendations expressed in this material are those of the authors and do not necessarily reflect the views of the NSF.}\\
}
\date{}
\maketitle

\noteswarning

%

\begin{abstract}
Consider an expander graph in which a $\mu$ fraction of the vertices are marked. 
A random walk starts at a uniform vertex and at each step continues to a random neighbor. 
Gillman showed in 1993 that the number of marked vertices seen in a random walk of length $n$ is 
concentrated around its expectation, $\Phi := \mu n$, independent of the size of the graph. 
Here we provide a new and sharp tail bound, improving on the existing bounds whenever $\mu$ is not too large. 
\end{abstract}

\section{Introduction}

Let $G = (V, E)$ be a finite regular undirected graph, and let $A$ be its normalized adjacency matrix.  
Let 
\[
  \lambda(G) = \|A-J\|
\] 
be the second largest absolute value of an eigenvalue of $A$,
where $\| \cdot \|$ denotes
the operator norm, and $J$ is the matrix whose entries are all $1/|V|$. 
There exist families of $d$-regular graphs $G$ 
for some constant $d$ so 
that $\lambda(G)$ is bounded above by a
 constant less than $1$~\cite{LPS88, M88, F08}.
Such graphs are known as \emph{expander graphs}.

An \emph{expander sampler} samples vertices of an expander graph 
by performing a simple random walk on the graph.
Note that when $G$ is the complete graph with self-loops,
we have $A = J$ and $\lambda(G) = 0$,
and this corresponds to sampling vertices uniformly and independently.
One remarkable property of expander samplers is that the sampled vertices behave in 
various ways like vertices chosen uniformly and independently.
This is even though the sampled vertices are not independent---knowing
one vertex in the random walk narrows down the number of choices of the next
vertex in the walk to just $d$.
More precisely, fix a graph $G$, an arbitrary $n \ge 1$, and functions $f_1, \ldots, f_n: V \rightarrow [0, 1]$ (where often $f_1=\cdots=f_n=f$). 
Let $(Y_1, \ldots, Y_n)$ be the simple
 random walk on $G$ of length $n$, i.e., $Y_1$ is chosen uniformly at random from $V$, 
and each subsequent $Y_i$ is chosen uniformly
 from the set of neighbors of $Y_{i-1}$.  
We will be interested in the random variable
$S_n = f_1(Y_1)+\cdots+f_n(Y_n)$.

When the vertices are sampled uniformly and independently, the
behavior of $S_n$ is described
by the classical central limit theorem.  In particular, the distribution
of $S_n$ tends towards a normal distribution.
Refinements include the Berry-Esseen theorem, which describes the rate at which
this occurs~\cite{B41, E42}, and various large deviation bounds.
In all cases, analogous results hold for expander samplers, 
with the appropriate dependence on $\lambda(G)$. 
For a more complete
listing of these results, see Section 1.3.2 of the thesis of Mann~\cite{M96}.

Here we are interested in bounding $\E[\alpha^ {S_n}]$, the moment generating
function of $S_n$. 
Such bounds easily imply tail bounds on $S_n$ by Markov's inequality. 
When vertices are sampled uniformly and independently, letting $\mu_i = \E[f_i(v)]$ and $\Phi := \E[S_n] = \mu_1 + \cdots + \mu_n$,
 a simple calculation shows that
the moment generating function is bounded above by
\[
\E[\alpha^{ S_n}]
\leq
\exp((\alpha -1) \cdot \Phi ) \; .
\]

%

In this note we prove an analogous bound for the case of expander walks.
\begin{theorem}\label{thm:maindiff}
Let $G = (V, E)$ be a regular undirected graph and let $\lambda = \lambda(G)$.
Let $f_1, \ldots, f_n: V \rightarrow [0, 1]$, and let $\mu_i = \E[f_i(v)]$.
For a random walk 
$(Y_1, \ldots, Y_n)$, let $S_n = f_1(Y_1)+\cdots+f_n(Y_n)$
and $\Phi := \E[S_n] = \mu_1 + \cdots + \mu_n$.
Then for $1 < \alpha < 1/\lambda$, 
\begin{align}\label{eq:maindiffbound}
\E[\alpha^{S_n}]
\leq
\exp\parens[\bigg]{(\alpha-1) \cdot \Phi \cdot \parens[\bigg]{\frac{1-\lambda}{1-\alpha\lambda}}}.
\end{align}
\end{theorem}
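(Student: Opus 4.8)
The plan is to run the standard matrix/transfer-operator argument for expander walks, decomposing everything relative to the stationary direction, and then to work slightly harder in the final estimate to recover the sharp constant $\tfrac{1-\lambda}{1-\alpha\lambda}$.

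\emph{Set-up.} Let $u = |V|^{-1/2}\mathbf 1$ be the unit vector pointing along the uniform distribution, and for each $i$ let $E_i = \diag\bigl(\alpha^{f_i(v)}\bigr)_{v\in V}$, a positive diagonal matrix. Since $A$ is symmetric with $Au=u$ and the walk starts from $u$, a direct computation shows that for every $k$,
\[
a_k \;:=\; u^\top E_k A E_{k-1} A \cdots A E_1\, u \;=\; \E\bigl[\alpha^{S_k}\bigr], \qquad S_k := f_1(Y_1)+\cdots+f_k(Y_k),
\]
so that $\E[\alpha^{S_n}]=a_n$. I would study the vectors $z_0 = u$ and $z_k = E_k A z_{k-1}$, writing $z_k = a_k u + g_k$ with $g_k$ orthogonal to $u$ and $b_k = \|g_k\|$. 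Write also $E_i = I + (E_i-I)$, where $E_i-I\succeq 0$ has operator norm at most $\alpha-1$ since $0\le\alpha^{f_i(v)}-1\le\alpha-1$, put $\nu_i := u^\top(E_i-I)u = \E[\alpha^{f_i(v)}]-1$, which satisfies $\nu_i\le(\alpha-1)\mu_i$ by convexity of $t\mapsto\alpha^t$ on $[0,1]$, and $\beta_i := \sqrt{(\alpha-1)\nu_i}$.

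\emph{Two recursions.} Using that $\|Av\|\le\lambda\|v\|$ for $v$ orthogonal to $u$ (this is just $\lambda=\|A-J\|$, as $A$ and $A-J$ agree on $u^{\perp}$), that $\|E_i\|\le\alpha$, Cauchy--Schwarz for the positive-semidefinite form $E_i-I$, and the pointwise bound $(\alpha^{f_i(v)}-1)^2\le(\alpha-1)(\alpha^{f_i(v)}-1)$ (hence $\E[(\alpha^{f_i(v)}-1)^2]\le(\alpha-1)\nu_i$), one obtains the coupled estimates
\[
a_k \;\le\; (1+\nu_k)\,a_{k-1} + \lambda\beta_k\, b_{k-1}, \qquad b_k \;\le\; \beta_k\, a_{k-1} + \alpha\lambda\, b_{k-1},
\]
with $a_0=1$, $b_0=0$. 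Since $\alpha^{S_k}\ge\alpha^{S_{k-1}}$ pointwise, $a_k\ge a_{k-1}\ge 1$, so the ratios $\rho_k := b_k/a_k$ satisfy $\rho_k\le \beta_k+\alpha\lambda\,\rho_{k-1}$, hence, using $\alpha\lambda<1$, $\rho_k\le\sum_{j\le k}(\alpha\lambda)^{k-j}\beta_j$. Dividing the first recursion by $a_{k-1}$ and using $\log(1+x)\le x$,
\[
\log a_n \;=\; \sum_{k=1}^n\log\frac{a_k}{a_{k-1}} \;\le\; \sum_{k=1}^n \nu_k \;+\; \lambda\sum_{k=1}^n \beta_k\,\rho_{k-1} \;\le\; \sum_{k=1}^n \nu_k \;+\; \lambda\!\!\sum_{1\le j<k\le n}\!\!(\alpha\lambda)^{k-1-j}\beta_j\beta_k .
\]
The remaining step is to bound the off-diagonal double sum sharply: grouping by $d=k-j$ and applying Cauchy--Schwarz in $j$ gives $\sum_j\beta_j\beta_{j+d}\le\sum_k\beta_k^2$, so the double sum is at most $\bigl(\sum_{d\ge1}(\alpha\lambda)^{d-1}\bigr)\sum_k\beta_k^2 = \tfrac1{1-\alpha\lambda}\sum_k\beta_k^2 = \tfrac{\alpha-1}{1-\alpha\lambda}\sum_k\nu_k$. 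Combining and using $\sum_k\nu_k\le(\alpha-1)\Phi$,
\[
\log a_n \;\le\; \Bigl(1+\tfrac{\lambda(\alpha-1)}{1-\alpha\lambda}\Bigr)\sum_{k=1}^n\nu_k \;=\; \frac{1-\lambda}{1-\alpha\lambda}\sum_{k=1}^n\nu_k \;\le\; \frac{1-\lambda}{1-\alpha\lambda}\,(\alpha-1)\,\Phi,
\]
and exponentiating gives the claimed inequality~\eqref{eq:maindiffbound}.

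\emph{Where the difficulty lies.} Two spots need care. First, the coefficients $\lambda\beta_k$, $\beta_k$, $\alpha\lambda$ in the two recursions must be obtained tightly; in particular the variance-type bound $\E[(\alpha^{f_i(v)}-1)^2]\le(\alpha-1)\nu_i$, rather than a crude $\|E_i\|\le\alpha$, is what keeps the diagonal contribution equal to $\sum_k\nu_k$. Second, and more essentially, recovering the \emph{sharp} constant rules out the naive shortcut of telescoping a single linear combination $a_k+t b_k$: that forces terms proportional to $\sqrt{\nu_k}$ into the exponent, which sum to order $\sqrt n$ in the relevant regime $\nu_k=\Theta(1/n)$ and ruin the bound. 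Separating the diagonal term $\sum_k\nu_k$ from the geometrically weighted off-diagonal term $\sum_{j<k}(\alpha\lambda)^{k-1-j}\beta_j\beta_k$ and killing the latter by Cauchy--Schwarz is precisely what produces the factor $\tfrac1{1-\alpha\lambda}$, hence the sharp $\tfrac{1-\lambda}{1-\alpha\lambda}$; this is the main obstacle.
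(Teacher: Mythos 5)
Your proof is correct, and it takes a genuinely different route from the paper's. The paper proceeds via the moment method: it first bounds each ordered monomial $\E[Z_{w_1}\cdots Z_{w_k}]$ by splitting every $A^{d}$ into $(1-\lambda^{d})J$ plus a remainder of operator norm $\lambda^{d}$ (Lemma~\ref{lem:monomial}), handles the asymmetry among the $\mu_{w_i}$ with a Cauchy--Schwarz inequality for polynomials with nonnegative coefficients (Claim~\ref{claim:csapp}), assembles the moments $\E[S_n^q]$ with Stirling-number bookkeeping, and finally resums the Taylor expansion of $\alpha^{S_n}$. You instead work directly with the transfer operator for the moment generating function, splitting the evolving vector $z_k=E_kAz_{k-1}$ into its component along $\mathbf 1$ and its orthogonal part, and your coupled recursion for $(a_k,b_k)$ checks out: the cross term $u^\top(E_k-I)h$ is correctly controlled by $\sqrt{\nu_k}\cdot\sqrt{(\alpha-1)}\,\lambda b_{k-1}=\lambda\beta_k b_{k-1}$ via Cauchy--Schwarz for the PSD form, the bound $\|P^\perp(E_k-I)u\|^2\le\E[(\alpha^{f_k}-1)^2]\le(\alpha-1)\nu_k$ gives the $\beta_k a_{k-1}$ term, monotonicity $a_k\ge a_{k-1}\ge 1$ legitimizes passing to the ratios $\rho_k$, and the shift-wise Cauchy--Schwarz $\sum_j\beta_j\beta_{j+d}\le\sum_k\beta_k^2$ followed by the geometric series in $\alpha\lambda$ produces exactly the factor $1+\tfrac{\lambda(\alpha-1)}{1-\alpha\lambda}=\tfrac{1-\lambda}{1-\alpha\lambda}$. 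Both arguments ultimately rest on the same decomposition of the walk operator relative to $J$, and your Cauchy--Schwarz over time shifts plays the role that Claim~\ref{claim:csapp} plus the combinatorial resummation play in the paper. What your route buys is the avoidance of the Stirling-number and Taylor-series machinery, plus a marginally stronger conclusion, namely $\E[\alpha^{S_n}]\le\exp\bigl(\tfrac{1-\lambda}{1-\alpha\lambda}\sum_i(\E[\alpha^{f_i}]-1)\bigr)$, which implies \eqref{eq:maindiffbound} since $\E[\alpha^{f_i}]-1\le(\alpha-1)\mu_i$. What the paper's route buys is the standalone monomial bound of Lemma~\ref{lem:monomial}, which is tight for the two-state chain of Section~\ref{sec:tightness} and is of independent interest.
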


As stated previously, bounds on the moment generating function immediately imply tail bounds.
In particular, by plugging in $\alpha = \lambda^{-1} - (1-\lambda)/(t^{1/2} \lambda^{3/2})$
in Theorem~\ref{thm:maindiff} we obtain the following.
\begin{corollary}\label{cor:tailbound}
In the setting of Theorem~\ref{thm:maindiff}, for all $t > 1/\lambda$,
\begin{equation}\label{eq:tailbound}
\Pr[S_n \geq t \Phi]
\leq
\parens[\Big]{\frac{1}{\lambda} - \frac{1-\lambda}{t^{1/2} \lambda^{3/2}}}^{-t \Phi} \exp(\Phi \cdot (1-\lambda)(\sqrt{t \lambda}-1)/\lambda) \; .
\end{equation}
\end{corollary}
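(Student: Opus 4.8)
The plan is to obtain the corollary directly from Theorem~\ref{thm:maindiff} by a single application of Markov's inequality, optimized over the free parameter $\alpha$; all of the substance is already in the moment-generating-function bound \eqref{eq:maindiffbound}, so what remains is choosing $\alpha$ well and simplifying.

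First I would note that for any $1 < \alpha < 1/\lambda$, since $x \mapsto \alpha^x$ is increasing,
\[
\Pr[S_n \geq t\Phi] = \Pr[\alpha^{S_n} \geq \alpha^{t\Phi}] \leq \alpha^{-t\Phi}\,\E[\alpha^{S_n}] \leq \alpha^{-t\Phi}\exp\!\left((\alpha-1)\,\Phi\,\frac{1-\lambda}{1-\alpha\lambda}\right),
\]
by Markov's inequality followed by \eqref{eq:maindiffbound}. This reduces the problem to minimizing the right-hand side over $\alpha \in (1,1/\lambda)$. I would then plug in the stated value $\alpha = \lambda^{-1} - (1-\lambda)/(t^{1/2}\lambda^{3/2})$, after the two routine checks. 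For admissibility: $t > 1/\lambda$ gives $t^{1/2}\lambda^{3/2} > \lambda$, hence $(1-\lambda)/(t^{1/2}\lambda^{3/2}) < (1-\lambda)/\lambda = 1/\lambda - 1$, so $\alpha > 1$, while $\alpha < 1/\lambda$ is immediate. For the exponent: a short computation gives $\alpha - 1 = \frac{1-\lambda}{\lambda}\cdot\frac{\sqrt{t\lambda}-1}{\sqrt{t\lambda}}$ and $1 - \alpha\lambda = \frac{1-\lambda}{\sqrt{t\lambda}}$, so $\frac{1-\lambda}{1-\alpha\lambda} = \sqrt{t\lambda}$ and therefore $(\alpha-1)\cdot\frac{1-\lambda}{1-\alpha\lambda} = \frac{(1-\lambda)(\sqrt{t\lambda}-1)}{\lambda}$. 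Substituting back into the display yields exactly \eqref{eq:tailbound}.

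Since the argument is entirely mechanical, there is no genuine obstacle; the only point worth commenting on is where this particular $\alpha$ comes from. Writing $g(\alpha)$ for the logarithm of the right-hand side above and setting $g'(\alpha) = 0$ gives $(1-\alpha\lambda)^2 = \alpha(1-\lambda)^2/t$, i.e.\ $1-\alpha\lambda = \sqrt{\alpha}\,(1-\lambda)/\sqrt{t}$; replacing $\sqrt{\alpha}$ on the right by its near-optimal value $1/\sqrt{\lambda}$ produces precisely $\alpha = \lambda^{-1} - (1-\lambda)/(t^{1/2}\lambda^{3/2})$. One could instead substitute the exact minimizer of $g$, but it is messier to state and the improvement is negligible, so the stated choice is the natural one to present.
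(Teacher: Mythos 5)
Your proof is correct and is exactly the paper's intended argument: the paper simply states that the corollary follows by plugging $\alpha = \lambda^{-1} - (1-\lambda)/(t^{1/2}\lambda^{3/2})$ into Theorem~\ref{thm:maindiff} (via Markov's inequality), and your admissibility check and algebraic simplification of the exponent carry that out correctly.
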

For instance, for $\lambda = 1/2$, we obtain
\[
\Pr[S_n \geq t \Phi]
\leq
\parens[\Big]{2 - \sqrt{\frac{2}{t}}}^{-t \Phi} \exp(\Phi \cdot (\sqrt{t/2}-1)) \; .
\]
We also note that for large $t$, the bound in~\eqref{eq:tailbound} is roughly $\lambda^{t \Phi}$,
which is again close to tight by the example in Section~\ref{sec:tightness}.

\paragraph{Related Work.}
Theorem~\ref{thm:maindiff} and Corollary~\ref{cor:tailbound} are 
part of a long line of work on proving tail bounds
for expander walks starting from Gillman~\cite{Gillman1993,G98},
and further developed in~\cite{D95, K97, L98, Wagner08, LCP04, H08, ChungLLM12, HazlaH15, P15, NaorRR17}. 

Closest to our work are the results of Lezaud~\cite{L98} 
and Le{\'o}n and Perron~\cite{LCP04}.
Lezaud proved a bound of a form similar to~\eqref{eq:maindiffbound} (see
his Remark 2) with a large constant in the exponent.
Le{\'o}n and Perron~\cite{LCP04} later improved on his result,
proving a sharp bound on the moment generating function with
corresponding tail bounds. 
Although their results are sharp, their bounds appear to be 
somewhat unwieldy, and they do not explicitly include 
convenient tail bounds as in our Corollary~\ref{cor:tailbound}.
Another difference is that both results~\cite{L98, LCP04}
assume $f_1 = \cdots = f_n$, although with some work, 
one can probably extend them 
to the case of general $f_1,\ldots,f_n$ as we consider here 
(and which is required for the application mentioned below). 
A final difference between our work and past work is in terms of proof techniques. 
Most work in this area including~\cite{L98, LCP04} use perturbation theory,
while some more recent work uses direct linear algebra arguments. 
Our proof is of the latter type, and arose from our joint work with Naor~\cite{NaorRR17}.
We also feel that our proof is somewhat cleaner than some previous proofs. 

\subsection{Application to low-randomness samplers}

One motivation for our work comes from a recent paper of Meka~\cite{M15},
as explained next. 
For a set $V$, define a \emph{sampler} from $V$ of length $n$ as a function whose range 
is $V^n$. The \emph{seed length} of a sampler is defined as the logarithm of the cardinality of the function's domain,
and can be seen as the number of random bits necessary to sample from the function. 
With this terminology, an expander sampler for a $d$-regular graph $G = (V, E)$ 
has seed length $\log_2 |V| + (n-1) \log_2 d$.
In particular, if $G$ is a constant degree expander, the seed length of $H$ is $\log_2 |V| + O(n)$.

In~\cite{M15}, Meka constructs a (non-expander) sampler satisfying a bound similar to
that in Eq.~\eqref{eq:maindiffbound} using the worse seed length of
$O(n + \log |V| + n(\log \log |V| + \log(1/(\mu_1+\cdots+\mu_n)))/\log n)$.
Our results show that the sampler of~\cite{M15} can be replaced with an
expander sampler, providing an improved seed length and a simpler construction. 
Notice however that Meka does not require optimal constants, and instead of
using our bounds, one could also use some of the earlier work, such as~\cite{L98,Wagner08}
(after extending them to deal with $f_1,\ldots,f_n$ that are not necessarily all equal;
see also~\cite[Section 3.2]{ChungLLM12}).

\subsection{Sharpness}\label{sec:tightness}

We now sketch an argument showing that Theorem~\ref{thm:maindiff} is sharp in the following sense. 
Fix arbitrary $\lambda \in [0,1)$ and $\Phi > 0$, and consider the matrix 
$A = \lambda I + (1-\lambda) J$ of dimensions $|V| \times |V|$. 
This corresponds to the walk where at each step 
we either stay in place with probability $\lambda$, or choose a uniform vertex with
probability $1-\lambda$. (Strictly speaking, $A$ does not correspond to a regular
unweighted graph; it is straightforward to modify the example to this case.)
Let $f_1=\cdots=f_n=f$ be the function that assigns $1$ to a $\mu=\Phi/n$ fraction
of ``marked'' vertices and $0$ to the remaining vertices (where we assume for simplicity
that $\Phi |V|/n$ is integer). Equivalently, one can consider a Markov chain with two states, 
one marked and one unmarked; a step in the chain stays in the same state with probability $\lambda$ 
and otherwise chooses from the stationary distribution, which assigns mass $\mu$ to the marked state 
and $1-\mu$ to the unmarked state. 

Then we claim that as $n$ goes to infinity,
the left-hand side of Eq.~\eqref{eq:maindiffbound} converges to the right-hand side. 
To see that, we say that a step of the walk is a ``hit'' if 
(1) the walk chooses a uniform vertex (which happens with probability $1-\lambda$), and (2) that chosen vertex is marked.
Then observe that the random variable counting the number of hits during the walk 
converges to a Poisson distribution with 
expectation $(1-\lambda)\Phi$ (since it is the sum of $n$ independent Bernoulli random variables,
each with probability $(1-\lambda)\mu$ of being $1$). Moreover, each time a hit occurs,
 we stay in that vertex a number of steps that is distributed like 
a geometric distribution with success probability $1-\lambda$. (We are ignoring 
here lower order effects, such as reaching the end of the walk.) 
Therefore, using the probability mass function of the Poisson distribution and the moment generating
function of the geometric distribution, we see that for any $\alpha < 1/\lambda$, as $n$ goes to infinity, $\E[\alpha^{S_n}]$ converges to
\begin{align*}
  \sum_{k=0}^\infty \frac{\exp(-(1-\lambda)\Phi) ((1-\lambda)\Phi)^k}{k!} \parens[\Big]{\frac{(1-\lambda)\alpha}{1-\lambda \alpha}}^k &= \exp\parens[\Big]{-(1-\lambda)\Phi + (1-\lambda)\Phi \frac{(1-\lambda)\alpha}{1-\lambda \alpha}} \\
 &= \exp\parens[\bigg]{\Phi \cdot \parens[\bigg]{\frac{(1-\lambda)(\alpha-1)}{1-\alpha\lambda}}} \; ,
\end{align*}
as desired.

\section{Preliminaries}

For $p \ge 1$, let the $p$-norm of an $N$-dimensional vector $v$ be defined as 
\[ \|v\|_p= \parens[\Big]{\frac{|v_1|^p+|v_2|^p+\cdots + |v_N|^p}{N}}^{1/p}
\;.
\]  
Note that under this definition, $\|v\|_p \leq \|v\|_q$ if $p \leq q$.  
Additionally, we let the operator norm of a matrix $A \in \R^{N \times N}$ be defined as 
\[\|A\| = \max_{v: \|v\|_2 = 1} \|Av \|_2.\]
For a vector $v$, we let $\diag(v)$ be the diagonal matrix where $\diag(v)_{i, i} = v_i$.

We denote by $\mathbf{1}$ the all-$1$ vector, and by $J$ the matrix whose entries are all $1/N$.
Given a 
regular undirected graph $G = (V, E)$, let $A$ be its normalized adjacency matrix.  
We let $\lambda(G) = \|A-J\|$ (where $J$ is the matrix whose entries are all
 $1/|V|$) be the second largest eigenvalue in absolute value of $A$.

\section{Bounding monomials}\label{sec:submonom}

In this section we prove Lemma~\ref{lem:monomial}, bounding the expectation
of monomials in the $f_i(Y_i)$.  
We start with two simple claims. 

\begin{claim}\label{clm:jbetween}
For all $k \ge 1$ and matrices $R_1,\ldots,R_k \in \R^{N \times N}$,
\[
\|R_1 J R_2 J \cdots J R_k \mathbf{1} \|_1 \le \prod_{i=1}^k \|R_i \mathbf{1}\|_1 \; .
\]
\end{claim}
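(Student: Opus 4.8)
The plan is to exploit the single structural fact that $J$ is rank one: for any vector $v \in \R^N$ we have $Jv = \bar v\, \mathbf{1}$, where $\bar v = \frac{1}{N}\sum_i v_i$ is the mean of the coordinates of $v$. Two elementary consequences of this will do all the work. First, $|\bar v| = \big|\frac{1}{N}\sum_i v_i\big| \le \frac{1}{N}\sum_i |v_i| = \|v\|_1$ by the triangle inequality. Second, $\|\mathbf{1}\|_1 = \frac{1}{N}\sum_i 1 = 1$, so $\|Jv\|_1 = |\bar v|\,\|\mathbf{1}\|_1 = |\bar v|$.

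I would prove the claim by induction on $k$. The base case $k=1$ is the trivial identity $\|R_1 \mathbf{1}\|_1 \le \|R_1 \mathbf{1}\|_1$. For the inductive step with $k \ge 2$, write
\[
R_1 J R_2 J \cdots J R_k \mathbf{1} = R_1 J v, \qquad \text{where } v := R_2 J R_3 J \cdots J R_k \mathbf{1}.
\]
Using $Jv = \bar v\,\mathbf{1}$ we get $R_1 J v = \bar v\,(R_1 \mathbf{1})$, hence
\[
\|R_1 J R_2 J \cdots J R_k \mathbf{1}\|_1 = |\bar v|\,\|R_1 \mathbf{1}\|_1 \le \|v\|_1 \cdot \|R_1 \mathbf{1}\|_1,
\]
where the inequality is the bound $|\bar v| \le \|v\|_1$ noted above. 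Finally, the induction hypothesis applied to the $(k-1)$-fold product gives $\|v\|_1 = \|R_2 J \cdots J R_k \mathbf{1}\|_1 \le \prod_{i=2}^k \|R_i \mathbf{1}\|_1$, and substituting this in yields $\prod_{i=1}^k \|R_i \mathbf{1}\|_1$, as desired.

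There is essentially no obstacle here: the only thing to notice is that inserting $J$ between two matrices "collapses" everything to its right into a scalar multiple of $\mathbf{1}$, and that this scalar (a coordinate mean) is controlled by the $1$-norm. The one point to state carefully is the normalization in the definition of $\|\cdot\|_1$, which is exactly what makes $\|\mathbf{1}\|_1 = 1$ and hence makes the telescoping clean; if one used the unnormalized $\ell_1$ norm the constant $N$ would reappear and the bound would fail. I would remark that, unrolled, the argument simply says $R_1 J R_2 J \cdots J R_k \mathbf{1} = \big(\prod_{i=2}^k \overline{R_i \mathbf{1}}\big)\, R_1 \mathbf{1}$, which makes the estimate transparent.
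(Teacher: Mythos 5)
Your proposal is correct and is essentially the paper's own argument: the paper's proof consists precisely of the observation that $Jv = \alpha\mathbf{1}$ with $|\alpha| \le \|v\|_1$ (under the normalized $1$-norm), followed by induction. Your write-up just spells out the induction and the normalization point explicitly.
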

\begin{proof}
Notice that for any vector $v$, $Jv = \alpha \mathbf{1}$ where $\alpha$ is the average
of the coordinates of $v$ and hence satisfies $|\alpha| \le \|v\|_1$.  The claim 
then follows by induction. 
\end{proof}

\begin{claim}\label{clm:claimcombo}
For all $k \ge 1$, vectors $u_1, \ldots, u_{k} \in [0, 1]^N$, $U_i = \diag(u_i)$ for all $i$, 
and matrices $T_1, \ldots, T_{k-1} \in \R^{N \times N}$,
\[
\left\|U_1 T_1 U_2 T_2 \cdots T_{k-1} U_k \mathbf{1} \right\|_{1} 
\leq 
\sqrt{\|u_1\|_1\|u_{k}\|_1}\prod_{i=1}^{k-1} \|T_i\| \; .
\]
\end{claim}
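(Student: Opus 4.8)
The plan is to split each diagonal matrix $U_i$ into two copies of its square root, regroup the product so that plain submultiplicativity of the spectral norm handles everything in the middle, and spend one Cauchy--Schwarz estimate at each of the two ends.

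Since every entry of $u_i$ lies in $[0,1]$, the diagonal matrix $\sqrt{U_i} := \diag(\sqrt{u_{i,1}},\dots,\sqrt{u_{i,N}})$ is real and has spectral norm $\|\sqrt{U_i}\| = \max_\ell \sqrt{u_{i,\ell}} \le 1$ (the operator norm of a diagonal matrix being the largest absolute diagonal entry). Writing $U_i = \sqrt{U_i}\sqrt{U_i}$ everywhere and regrouping the $2k$ square-root factors, one obtains
\[
U_1 T_1 U_2 T_2 \cdots T_{k-1} U_k \mathbf{1} = \sqrt{U_1}\, M\, \sqrt{U_k}\,\mathbf{1}, \qquad M := \prod_{i=1}^{k-1}\bigl(\sqrt{U_i}\,T_i\,\sqrt{U_{i+1}}\bigr),
\]
where each interior $\sqrt{U_j}$ ($2 \le j \le k-1$) appears in two consecutive blocks of $M$, while $\sqrt{U_1}$ and $\sqrt{U_k}$ each appear once inside $M$ and once outside.

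Next I would bound the three pieces. By submultiplicativity of $\|\cdot\|$ and $\|\sqrt{U_j}\| \le 1$, we get $\|M\| \le \prod_{i=1}^{k-1} \|\sqrt{U_i}\|\,\|T_i\|\,\|\sqrt{U_{i+1}}\| \le \prod_{i=1}^{k-1}\|T_i\|$. At the $\mathbf{1}$-end, a direct computation gives $\|\sqrt{U_k}\mathbf{1}\|_2 = \bigl(\tfrac1N \sum_\ell u_{k,\ell}\bigr)^{1/2} = \sqrt{\|u_k\|_1}$. At the $U_1$-end, using $\|v\|_1 \le \|v\|_2$ followed by Cauchy--Schwarz,
\[
\|\sqrt{U_1}\,w\|_1 = \frac1N\sum_\ell \sqrt{u_{1,\ell}}\,|w_\ell| \le \Bigl(\frac1N\sum_\ell u_{1,\ell}\Bigr)^{1/2}\Bigl(\frac1N\sum_\ell w_\ell^2\Bigr)^{1/2} = \sqrt{\|u_1\|_1}\,\|w\|_2 .
\]
Applying this with $w = M\sqrt{U_k}\mathbf{1}$, together with $\|M w'\|_2 \le \|M\|\,\|w'\|_2$, yields $\|U_1 T_1 \cdots U_k\mathbf{1}\|_1 \le \sqrt{\|u_1\|_1}\,\|M\|\,\|\sqrt{U_k}\mathbf{1}\|_2 \le \sqrt{\|u_1\|_1\|u_k\|_1}\prod_{i=1}^{k-1}\|T_i\|$, which is the claim. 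The degenerate case $k=1$ (empty product over $T_i$, $U_1\mathbf{1} = u_1$) reduces to $\|u_1\|_1 \le \|u_1\|_1$, with equality.

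I do not expect a serious obstacle; the only delicate point is the asymmetry between the two ends. At the $U_1$-end one must use the sharper Cauchy--Schwarz estimate $\|\sqrt{U_1}w\|_1 \le \sqrt{\|u_1\|_1}\,\|w\|_2$ rather than the crude $\|\sqrt{U_1}\| \le 1$, in order to recover the factor $\sqrt{\|u_1\|_1}$ instead of just $1$; correspondingly, $\sqrt{U_k}$ must be kept attached to $\mathbf{1}$ so that evaluating $\|\sqrt{U_k}\mathbf{1}\|_2$ produces the matching $\sqrt{\|u_k\|_1}$.
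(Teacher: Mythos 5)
Your proof is correct and follows essentially the same route as the paper's: one Cauchy--Schwarz estimate at each end to extract $\sqrt{\|u_1\|_1}$ and $\sqrt{\|u_k\|_1}$, with operator-norm submultiplicativity handling everything in between. The only cosmetic difference is that you factor $U_1$ and $U_k$ into square roots to get the $\sqrt{\|u\|_1}$ factors directly, whereas the paper keeps the $U_i$ whole, obtains $\|u_1\|_2\|u_k\|_2$, and then uses $\|u\|_2^2 \le \|u\|_1$ for $[0,1]$-valued entries.
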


\begin{proof}
The case $k=1$ is immediate since $U_1 \mathbf{1} = u_1$. 
For $k>1$, 
\begin{align*}
\left\|U_1 T_1 U_2 T_2 \cdots T_{k-1} U_k \mathbf{1} \right\|_{1} 
& \le
\|u_1\|_2 
\|T_1 U_2 T_2 \cdots T_{k-1} u_k \|_2 \\
& \le 
\|u_1\|_2 \|u_k \|_2 \prod_{i=1}^{k-1} \|T_i\|\; .
\end{align*}
where the first inequality is the Cauchy-Schwarz inequality, and the
second inequality is by the definition of operator norm and the observation
that $\|U_i\| = \|u_i\|_\infty \le 1$.
We complete the proof by noting that $\|u_i\|_2^2 \le \|u_i\|_1$
since the entries of $u_i$ are in $[0,1]$.
\end{proof}

We can now prove the main result of this section. 

\begin{lemma}\label{lem:monomial}
Let $G = (V, E)$ be a regular undirected graph and let $\lambda = \lambda(G)$.  
Let $f_1, \ldots, f_n: V \rightarrow [0, 1]$, and let $\mu_i = \E[f_i(v)]$.  For a random walk 
$(Y_1, \ldots, Y_n)$, let $Z_i = f_i(Y_i)$ for all $i$.
Then for all $k \ge 1$ and $w \in [n]^k$ such that 
$w_1 \leq w_2 \leq \cdots \leq w_k$, 
\begin{align}
\E[Z_{w_1}Z_{w_2}\cdots Z_{w_k}] 
&\leq
\sum_{s\in\{0, 1\}^{k-1}}
\sqrt{\mu_{w_1}\mu_{w_k}}\parens[\Bigg]{\prod_{i: s_i = 0}(1-\lambda^{w_{i+1}-w_i})\sqrt{\mu_{w_i}\mu_{w_{i+1}}}}
\parens[\Bigg]{\prod_{i: s_i = 1} \lambda^{w_{i+1}-w_i}} \nonumber \\
&=
\sqrt{\mu_{w_1}\mu_{w_k}}\prod_{i=1}^{k-1}
\parens[]
{{(1-\lambda^{w_{i+1}-w_{i}})\sqrt{\mu_{w_i}\mu_{w_{i+1}}}}+\lambda^{w_{i+1}-w_{i}}}. \label{eq:boundmonomials}
\end{align}
\end{lemma}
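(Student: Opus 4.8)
The plan is to write $\E[Z_{w_1}\cdots Z_{w_k}]$ as an explicit matrix-vector expression and then expand each transition matrix as $A^{w_{i+1}-w_i} = (A-J)^{w_{i+1}-w_i} + \big(\text{terms with }J\big)$. First I would observe that if $U_i = \diag(f_i)$ viewed as a function on $V$ (so $U_{w_i}\mathbf{1}$ has entries $f_{w_i}(v)$), then
\[
\E[Z_{w_1}\cdots Z_{w_k}]
= \mathbf{1}^t\, U_{w_1}\, A^{w_2-w_1}\, U_{w_2}\, A^{w_3-w_2}\cdots A^{w_k-w_{k-1}}\, U_{w_k}\,\mathbf{1}\Big/N,
\]
because starting from the uniform distribution $\tfrac1N\mathbf{1}$, applying $A$ advances the walk by one step, and multiplying by $U_{w_i}$ inserts the factor $f_{w_i}(Y_{w_i})$. (One has to be slightly careful that the gaps $w_{i+1}=w_i$ are allowed, in which case $A^0=I$; the monotonicity hypothesis $w_1\le\cdots\le w_k$ makes all exponents nonnegative, which is exactly why it is there.) The quantity is nonnegative, so I can bound it by its absolute value and hence by $\|U_{w_1} A^{w_2-w_1}\cdots U_{w_k}\mathbf{1}\|_1$.

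Next, the key algebraic step: since $AJ = JA = J$ (as $A$ is the normalized adjacency matrix of a regular graph, $J$ being the projector onto $\mathbf{1}$) and $(A-J)J = 0$, we get the clean identity $A^m = (A-J)^m + J$ for every $m\ge 1$. Substituting this for each of the $k-1$ transition blocks and fully expanding the product yields a sum over subsets $s\in\{0,1\}^{k-1}$: coordinate $s_i=1$ selects the "$J$" term in block $i$, and $s_i=0$ selects the "$(A-J)^{w_{i+1}-w_i}$" term. So
\[
\|U_{w_1} A^{w_2-w_1}\cdots U_{w_k}\mathbf{1}\|_1
\le \sum_{s\in\{0,1\}^{k-1}} \big\| U_{w_1}\,M_1\,U_{w_2}\,M_2\cdots M_{k-1}\,U_{w_k}\,\mathbf{1}\big\|_1,
\]
where $M_i = J$ if $s_i=1$ and $M_i=(A-J)^{w_{i+1}-w_i}$ if $s_i=0$.

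For each fixed $s$, I would bound the term by splitting the chain of matrices at the positions where $J$ appears. Between two consecutive $J$'s (and at the two ends), we have a block of the form $U_{w_a} (A-J)^{\cdots} U_{w_{a+1}} \cdots (A-J)^{\cdots} U_{w_b}$; here Claim~\ref{clm:claimcombo} applies with the $T_i$ being the powers $(A-J)^{w_{i+1}-w_i}$, whose operator norms are at most $\lambda^{w_{i+1}-w_i}$ since $\|A-J\| = \lambda$. This contributes $\sqrt{\|u_{w_a}\|_1 \|u_{w_b}\|_1}\prod \lambda^{w_{i+1}-w_i}$ for the interior $(A-J)$ factors of that block, i.e. $\sqrt{\mu_{w_a}\mu_{w_b}}\prod_{i\in\text{block}}\lambda^{w_{i+1}-w_i}$ (recalling $\|u_{w_j}\|_1 = \mu_{w_j}$). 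Then Claim~\ref{clm:jbetween} glues these blocks together across the $J$'s: with $R_j$ being the $j$-th block (as a linear map applied to $\mathbf 1$), we get $\|R_1 J R_2 J\cdots J R_{\text{last}}\mathbf 1\|_1 \le \prod_j \|R_j\mathbf 1\|_1$, which is exactly the product of the per-block bounds. Tracking the square-root factors carefully, each index $w_i$ that is an internal endpoint of two adjacent blocks (i.e. where $s_{i-1}=s_i=1$... — more precisely, each cut point) picks up a $\sqrt{\mu_{w_i}}$ from each side, and the leftmost and rightmost indices $w_1,w_k$ pick up a single $\sqrt{\mu_{w_1}}$, $\sqrt{\mu_{w_k}}$ each. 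Matching this bookkeeping against the claimed formula shows that a term with $s_i=1$ contributes $\lambda^{w_{i+1}-w_i}$ and a term with $s_i=0$ contributes $(1-\lambda^{w_{i+1}-w_i})$... wait — it should contribute $\sqrt{\mu_{w_i}\mu_{w_{i+1}}}$ times a power of $\lambda$; the factor $(1-\lambda^{w_{i+1}-w_i})$ in the statement is an \emph{upper bound} on $\sum$ of $\lambda^m$ for... Actually $(A-J)^m$ has norm $\le \lambda^m$, so the $s_i=0$ term should be bounded by $\lambda^{w_{i+1}-w_i}\sqrt{\mu_{w_i}\mu_{w_{i+1}}}$, and then $\lambda^m \le 1-\lambda^m$ is false in general — so the final step must instead use that one bounds $A^m$ directly, not $(A-J)^m+J$, by writing $A^m - J$... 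Let me restate: the $s_i=0$ contribution comes from bounding the "non-$J$ part", and $\|(A-J)^m\| \le \lambda^m \le$ something; but the stated bound has $1-\lambda^m$ there, which is $\ge \lambda^m$ only when $\lambda^m \le 1/2$. So the correct decomposition is probably $A^m = J + (A^m - J)$ with $\|A^m-J\|\le\lambda^m$, and then a \emph{further} relaxation replaces each $\mu$-weighted block by absorbing cross terms — alternatively the genuine bound is with $\lambda^{w_{i+1}-w_i}\sqrt{\mu_{w_i}\mu_{w_{i+1}}}$ and one then uses $\sqrt{\mu_{w_i}\mu_{w_{i+1}}}\le 1$ and $\lambda^m\le 1$ only where helpful.

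The main obstacle I anticipate is precisely this last bookkeeping: getting the square-root exponents on the $\mu_{w_i}$'s to come out to exactly $1/2$ at each cut and exactly $1$ at the two ends, and reconciling the $\lambda^{w_{i+1}-w_i}$ that Claims~\ref{clm:jbetween}–\ref{clm:claimcombo} naturally produce with the $(1-\lambda^{w_{i+1}-w_i})$ appearing in the statement — the latter must arise either from a slightly different (and slicker) way of splitting $A^m$ that I'd need to identify, or from noting that the $s_i=0$ branch should be compared against the contribution of \emph{all} $J$-patterns that "fill in" the gap, whose geometric sum telescopes. Once the combinatorial identity $\sum_{s}\prod_{s_i=0}(\cdots)\prod_{s_i=1}(\cdots) = \prod_i\big((\cdots)+(\cdots)\big)$ is invoked, the equality between the two lines of \eqref{eq:boundmonomials} is immediate, so all the work is in the inequality (the first line).
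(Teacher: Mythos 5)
Your setup is right --- the matrix-vector expression for $\E[Z_{w_1}\cdots Z_{w_k}]$, the expansion of each transition block into a ``$J$ part'' and a ``residual part,'' and the use of Claims~\ref{clm:jbetween} and~\ref{clm:claimcombo} to glue and bound the resulting blocks all match the paper. But the gap you flagged at the end is real, and your proposed decomposition $A^{d} = J + (A-J)^{d}$ does not close it: with that split the $J$-branch carries coefficient $1$, so you only obtain
\[
\E[Z_{w_1}\cdots Z_{w_k}] \;\le\; \sqrt{\mu_{w_1}\mu_{w_k}}\prod_{i=1}^{k-1}\Bigl(\sqrt{\mu_{w_i}\mu_{w_{i+1}}}+\lambda^{w_{i+1}-w_i}\Bigr),
\]
which is strictly weaker than Eq.~\eqref{eq:boundmonomials} (it lacks the factors $1-\lambda^{w_{i+1}-w_i}$). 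The ``slicker way of splitting $A^m$'' you suspected exists is the paper's choice: set $T_{i,0}=(1-\lambda^{d_i})J$ and $T_{i,1}=A^{d_i}-(1-\lambda^{d_i})J$ where $d_i=w_{i+1}-w_i$. The point is that $T_{i,1}=(A^{d_i}-J)+\lambda^{d_i}J$, and these two summands act on orthogonal subspaces ($A^{d_i}-J$ kills $\mathbf{1}$ and has norm at most $\lambda^{d_i}$ on $\mathbf{1}^\perp$, while $\lambda^{d_i}J$ kills $\mathbf{1}^\perp$), so $\|T_{i,1}\|=\lambda^{d_i}$ still. In other words, you can transfer a $\lambda^{d_i}J$ from the $J$ part into the residual \emph{for free}: the residual's operator norm does not grow, but the $J$ coefficient drops from $1$ to $1-\lambda^{d_i}$. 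With this decomposition the rest of your argument (triangle inequality over $s\in\{0,1\}^{k-1}$, cutting at the $J$'s, Claim~\ref{clm:jbetween} across cuts, Claim~\ref{clm:claimcombo} within blocks, and the bookkeeping of $\sqrt{\mu}$ factors, which you have right) goes through verbatim and yields exactly the stated bound; also note your labeling of $s_i$ is the reverse of the statement's, where $s_i=0$ is the $J$-branch.

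Two smaller remarks. First, your weaker bound would in fact suffice for everything downstream in the paper, since Lemma~\ref{lem:monomdiff} immediately discards the $(1-\lambda^{d_i})$ factors; but it does not prove Lemma~\ref{lem:monomial} as stated. Second, your worry about repeated indices $w_{i+1}=w_i$ is harmless: then $d_i=0$, $T_{i,0}=0$, $T_{i,1}=I$ with norm $\lambda^0=1$, and the product formula gives the correct factor $1$.
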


\begin{proof}
Let $d_i = w_{i+1}-w_i$ for all $i$, and let $A$ be the normalized adjacency matrix of $G$.  
Let $u_i$ be given by $(u_i)_v = f_{w_i}(v)$ and let $U_i = \diag(u_i)$.
Then
 \begin{align}\label{eq:monoboundfirst}
\E[Z_{w_1}Z_{w_2}\cdots Z_{w_k}] 
&=
\|U_{1}A^{d_1}U_{2}A^{d_2}\cdots A^{d_{k-1}}U_{k} \mathbf{1} \|_1.
\end{align}
Let $T_{i,0} = (1-\lambda^{d_i}) J$ and $T_{i,1} = A^{d_i} - (1-\lambda^{d_i}) J$
and notice that $\|T_{i,1}\|_2 = \lambda^{d_i}$.
Using the triangle inequality, we can bound the right-hand side of Eq.~\eqref{eq:monoboundfirst} from above by
\[
\sum_{s \in \{0,1\}^{k-1}}
\|U_{1}T_{1,s_1}U_{2}T_{2,s_2} \cdots T_{k-1,s_{k-1}} U_{k} \mathbf{1} \|_1.
\]
The main claim is that for each $s$, the term corresponding to $s$ satisfies
\begin{equation}\label{eq:mainclaiminmono}
\|U_{1}T_{1,s_1}U_{2}T_{2,s_2} \cdots T_{k-1,s_{k-1}} U_{k} \mathbf{1} \|_1
\le
\sqrt{\mu_{w_1}\mu_{w_k}}\parens[\Bigg]{\prod_{i: s_i = 0}(1-\lambda^{d_i})\sqrt{\mu_{w_i}\mu_{w_{i+1}}}}
\parens[\Bigg]{\prod_{i: s_i = 1} \lambda^{d_i}} \; .
\end{equation}
Notice that this claim immediately implies the lemma by summing over $s \in \{0,1\}^{k-1}$. 
To see why the claim is true, let $1 \le r_1 < \cdots < r_\ell \le k-1$ be the indices 
where $s$ is zero. Then by Claim~\ref{clm:jbetween}, the left-hand side of Eq.~\eqref{eq:mainclaiminmono}
is at most 
\begin{align*}
\parens[\Bigg]{\prod_{i: s_i = 0}(1-\lambda^{d_i})} \cdot
&\|U_{1}T_{1,1}U_{2} T_{2,1}\cdots T_{r_1-1,1} U_{r_1} \mathbf{1} \|_1 \cdot
\|U_{r_1+1}T_{r_1+1,1}U_{r_1+2} T_{r_1+2,1}\cdots T_{r_2-1,1} U_{r_2} \mathbf{1} \|_1 
\cdots \\
&\cdots
\|U_{r_\ell+1}T_{r_\ell+1,1}U_{r_\ell+2} T_{r_\ell+2,1}\cdots T_{k-1,1} U_{k} \mathbf{1} \|_1  \; .
\end{align*}
The claim now follows by applying Claim~\ref{clm:claimcombo}.
\end{proof}

When $\mu_1=\cdots=\mu_n=\mu$, the bound in Eq.~\eqref{eq:boundmonomials} simplifies to
\[
\E[Z_{w_1}Z_{w_2}\cdots Z_{w_k}]  
\le 
\mu \prod_{i=1}^{k-1}
\parens[]
{{(1-\lambda^{w_{i+1}-w_{i}})\mu}+\lambda^{w_{i+1}-w_{i}}}. 
\]
Observe that for the two-state Markov chain described in Section~\ref{sec:tightness},
for every $n \ge 1$ and every $1 \le w_1 \le \cdots \le w_k \le n$,
this inequality is actually an equality. Indeed, the left-hand side is the probability
that we are in the marked state at all the steps $w_1,\ldots,w_k$. 
The probability of being in the marked state at step $w_1$ is $\mu$ (as we are
in the stationary distribution); and the probability of being in the marked state at step
$w_{i+1}$ conditioned on being there at step $w_i$ is 
$(1-\lambda^{w_{i+1}-w_{i}})\mu+\lambda^{w_{i+1}-w_{i}}$.

This observation implies that the moment generating function $\E[\alpha^{S_n}]$
of an arbitrary graph and arbitrary $f_1,\ldots,f_n$ with all $\E[f_i]$
equal can be bounded by the moment generating function of the corresponding 
two-state Markov chain (as can be seen from the Taylor expansion;
see Eq.~\eqref{eq:taylormgf} below). This can be used to give an alternative 
(and perhaps more intuitive) proof of Theorem~\ref{thm:maindiff}. 
We do not include this proof here since it is
not clear how to extend it to the case of general $\mu_i$. 

\section{Proof of Theorem~\ref{thm:maindiff}}\label{sec:diff}

In this section we complete the proof of the main theorem using the bound in Lemma~\ref{lem:monomial}.
We start with the following easy corollary of Cauchy-Schwarz.  

\begin{claim}\label{claim:csapp}
Let $P \in \R[X_1, \ldots, X_n]$ be a multivariate polynomial with non-negative coefficients.
Then for $x_1, \ldots, x_n, y_1, \ldots, y_n \in \R$,
\[
P(x_1y_1, x_2y_2, \ldots, x_ny_n)
\leq
\max\{P(x_1^2, x_2^2, \ldots, x_n^2), P(y_1^2, y_2^2, \ldots, y_n^2)\}.
\]
\end{claim}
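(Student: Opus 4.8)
The plan is to expand $P$ into monomials, apply the Cauchy--Schwarz inequality at the level of the coefficients, and finish with the elementary bound $\sqrt{uv}\le\max\{u,v\}$ valid for $u,v\ge 0$.

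Write $P(X_1,\dots,X_n)=\sum_{\alpha}c_\alpha\prod_{i=1}^n X_i^{\alpha_i}$, a finite sum over exponent vectors $\alpha\in\Z_{\ge 0}^n$ with every coefficient $c_\alpha\ge 0$. Substituting $X_i=x_iy_i$ and bounding each (possibly negative) cross term by its absolute value, I would first observe
\[
P(x_1y_1,\dots,x_ny_n)=\sum_\alpha c_\alpha\prod_{i=1}^n x_i^{\alpha_i}\prod_{i=1}^n y_i^{\alpha_i}\le\sum_\alpha \sqrt{c_\alpha\prod_{i=1}^n x_i^{2\alpha_i}}\cdot\sqrt{c_\alpha\prod_{i=1}^n y_i^{2\alpha_i}},
\]
where each square root is a well-defined non-negative real because $c_\alpha\ge 0$ and the exponents $2\alpha_i$ are even. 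Next I would view the right-hand side as the standard inner product of the two vectors $\big(\sqrt{c_\alpha\prod_i x_i^{2\alpha_i}}\big)_\alpha$ and $\big(\sqrt{c_\alpha\prod_i y_i^{2\alpha_i}}\big)_\alpha$ and apply Cauchy--Schwarz:
\[
\sum_\alpha \sqrt{c_\alpha\prod_{i=1}^n x_i^{2\alpha_i}}\,\sqrt{c_\alpha\prod_{i=1}^n y_i^{2\alpha_i}}\le\sqrt{\sum_\alpha c_\alpha\prod_{i=1}^n x_i^{2\alpha_i}}\;\sqrt{\sum_\alpha c_\alpha\prod_{i=1}^n y_i^{2\alpha_i}}=\sqrt{P(x_1^2,\dots,x_n^2)}\,\sqrt{P(y_1^2,\dots,y_n^2)}.
\]
Finally, since all $c_\alpha\ge 0$, both $P(x_1^2,\dots,x_n^2)$ and $P(y_1^2,\dots,y_n^2)$ are non-negative, so $\sqrt{uv}\le\max\{u,v\}$ gives the claimed inequality.

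I do not expect a real obstacle here; the argument is short. The only points needing care are that the cross terms $\prod_i x_i^{\alpha_i}\prod_i y_i^{\alpha_i}$ can be negative — hence the passage first to absolute values and then to even powers — and that non-negativity of the coefficients is used twice: once to split each summand as a product of two non-negative square roots so that Cauchy--Schwarz applies, and once to ensure $P(x_1^2,\dots,x_n^2),P(y_1^2,\dots,y_n^2)\ge 0$ so that the concluding $\max$ step is legitimate.
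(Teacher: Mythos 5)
Your proof is correct and is essentially the paper's own argument: both expand $P$ into monomials, split each coefficient as $\sqrt{c_\alpha}\cdot\sqrt{c_\alpha}$, apply Cauchy--Schwarz to the resulting vectors indexed by exponent tuples, and finish with $\sqrt{uv}\le\max\{u,v\}$. The only cosmetic difference is that you first pass to absolute values before invoking Cauchy--Schwarz, whereas the paper applies it directly to the signed vectors $\bigl(\sqrt{c_\alpha}\prod_i x_i^{\alpha_i}\bigr)_\alpha$; both are valid.
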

\begin{proof}
Let 
\[
P(X_1, \ldots, X_n) = \sum_{m \in \N^{n}} a_{m}X_1^{m_1} X_2^{m_2} \cdots X_n^{m_n}
\]
for some $a_m \ge 0$.
Then
\begin{align*}
P(x_1y_1, x_2y_2, \ldots, x_ny_n)
&=
\sum_{m \in \N^{n}} a_m(x_1 y_1)^{m_1}(x_2 y_2)^{m_2} \cdots (x_n y_n)^{m_n} \\
&=
\sum_{m \in \N^{n}}(\sqrt{a_m} x_1^{m_1}x_2^{m_2}\cdots x_n^{m_n})(\sqrt{a_m} y_1^{m_1}y_2^{m_2}\cdots y_n^{m_n}) \\
&\leq 
\parens[\Bigg]{\sum_{m \in \N^{n}}a_m x_1^{2m_1}x_2^{2m_2}\cdots x_n^{2m_n}}^{1/2}
\parens[\Bigg]{\sum_{m \in \N^{n}}a_m y_1^{2m_1}y_2^{2m_2}\cdots y_n^{2m_n}}^{1/2} \\
&\leq
\max\{P(x_1^2, x_2^2, \ldots, x_n^2), P(y_1^2, y_2^2, \ldots, y_n^2)\},
\end{align*}
where the first inequality follows from Cauchy-Schwarz.
\end{proof}

\begin{lemma}\label{lem:monomdiff}
Let $G = (V, E)$ be a regular undirected graph, let $\lambda = \lambda(G)$,
and let $f_1, \ldots, f_n: V \rightarrow [0, 1]$.  For a random walk 
$(Y_1, \ldots, Y_n)$, let $Z_i = f_i(Y_i)$ for all $i$, and let $\mu_i = \E[f_i(v)]$, $\Phi = \mu_1+\cdots+\mu_n$.  
For all $k \in [n]$, let $W_k \subseteq [n]^k$ be the set of all $w$ such that 
$w_1 < w_2 < \cdots < w_k$.  Then
\[
\E\bracks[\Bigg]{\sum_{w \in W_k} Z_{w_1}Z_{w_2}\cdots Z_{w_k}}
\leq
\sum_{i=0}^{k-1} \binom{k-1}{i}\frac{\Phi^{i+1}\lambda^{k-i-1}}{(i+1)!(1-\lambda)^{k-i-1}} \; .
\]
\end{lemma}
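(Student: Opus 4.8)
The plan is to start from the bound on individual monomials given by Lemma~\ref{lem:monomial}, apply Claim~\ref{claim:csapp} to decouple the $\sqrt{\mu_{w_i}\mu_{w_{i+1}}}$ factors, and then recognize the resulting sum over $w \in W_k$ as a sum over compositions that evaluates in closed form. First I would fix $s \in \{0,1\}^{k-1}$ and look at the contribution to $\sum_{w \in W_k} \E[Z_{w_1}\cdots Z_{w_k}]$ coming from the $s$-term in the first line of Eq.~\eqref{eq:boundmonomials}: it has the shape $\sqrt{\mu_{w_1}\mu_{w_k}}\,\prod_{i:s_i=0}(1-\lambda^{w_{i+1}-w_i})\sqrt{\mu_{w_i}\mu_{w_{i+1}}}\,\prod_{i:s_i=1}\lambda^{w_{i+1}-w_i}$. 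The key structural point: the expression depends on the $\mu_{w_j}$ only through a monomial in $\mu_{w_1},\ldots,\mu_{w_k}$ with non-negative coefficients (each $\mu_{w_j}$ appears with exponent $1/2$ times its multiplicity, which is an integer). Viewing $\sqrt{\mu_{w_j}} = x_{w_j} y_{w_j}$ with $x = y = \sqrt{\mu}$, Claim~\ref{claim:csapp} lets me replace every $\sqrt{\mu_{w_i}\mu_{w_{i+1}}}$ (and the leading $\sqrt{\mu_{w_1}\mu_{w_k}}$) by a genuine integer-power monomial in the $\mu$'s — concretely, for each $i$ with $s_i = 0$ the factor $\sqrt{\mu_{w_i}\mu_{w_{i+1}}}$ becomes either $\mu_{w_i}$ or $\mu_{w_{i+1}}$, and overall the $s$-term is bounded by a product over a set of at most $i+1$ distinct indices (where $i = \#\{j : s_j = 0\}$) of single $\mu_{w}$'s, times $\prod_{j:s_j=1}\lambda^{w_{j+1}-w_j}$. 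One must check that after this substitution the $s$-term is dominated by something of the form $\lambda^{(\text{number of }1\text{'s in }s)}\prod_{\ell}\mu_{w_{j_\ell}}$ — no leftover $\lambda$ powers in the exponent beyond counting, because $\prod_{j:s_j=1}\lambda^{w_{j+1}-w_j} \le \lambda^{\#\{j:s_j=1\}}$ since $w$ is strictly increasing so each gap is $\ge 1$, and we can simply drop the $(1-\lambda^{d})$ factors (they are $\le 1$).

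Next I would carry out the summation over $w \in W_k$. After the reduction, each $s$ with exactly $i$ zeros contributes (up to the $\lambda^{k-1-i}$ factor and a combinatorial factor accounting for the $2^{?}$ Cauchy–Schwarz choices of which endpoint of each $\sqrt{\mu_{w_i}\mu_{w_{i+1}}}$ survives) a sum of the form $\sum_{w_1 < \cdots < w_k} \mu_{w_{j_1}}\cdots \mu_{w_{j_{i+1}}}$ over a fixed subset $\{j_1,\ldots,j_{i+1}\}$ of ``active'' indices. The strictly-increasing constraint together with the fact that the non-active $w$'s are completely free between their active neighbors means this sum is at most $\binom{\text{something}}{\cdots}\cdot\frac{1}{(i+1)!}(\mu_1+\cdots+\mu_n)^{i+1} = \frac{\Phi^{i+1}}{(i+1)!}$ (here I drop the ordering constraint among the $i+1$ active indices to get the $1/(i+1)!$, and drop the ordering of the inactive indices relative to actives, which is exactly where the binomial $\binom{k-1}{i}$ comes from — it counts the ways to interleave / choose which of the $k-1$ ``slots'' $s_j$ are zero, or equivalently which positions are active). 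Collecting: the total is at most $\sum_{i=0}^{k-1}\binom{k-1}{i}\,\lambda^{k-1-i}\cdot\frac{\Phi^{i+1}}{(i+1)!}\cdot(\text{correction for the }(1-\lambda)\text{'s})$.

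The remaining point is the factor $(1-\lambda)^{-(k-i-1)}$ in the target bound, which I expect comes not from dropping the $(1-\lambda^{d})$ terms but from a more careful treatment of the $\lambda$-runs: a maximal run of $r$ consecutive $1$'s in $s$ corresponds to a factor $\lambda^{w_{j+r}-w_j}$ summed over the intermediate $w$'s, and $\sum_{d \ge r}\binom{d-1}{r-1}\lambda^{d}$-type geometric sums produce $\lambda^{r}/(1-\lambda)^{r}$ rather than just $\lambda^{r}$; summing the geometric series over the free inner indices of each $\lambda$-block is what converts $\lambda^{k-1-i}$ into $\lambda^{k-1-i}/(1-\lambda)^{k-1-i}$. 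So the honest bookkeeping is: (1) use Lemma~\ref{lem:monomial} and Claim~\ref{claim:csapp} to reduce to integer-monomial $\mu$-terms indexed by $s$; (2) sum the geometric series over the ``inactive'' (i.e.\ $s_j = 1$) coordinates, which are unconstrained except for being squeezed between active ones, producing the $(1-\lambda)$ denominators; (3) sum the remaining $i+1$ active $\mu$-indices freely to get $\Phi^{i+1}/(i+1)!$; (4) count the $\binom{k-1}{i}$ choices of the zero-set of $s$ and collect terms.

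\textbf{Main obstacle.} The delicate step is (2) combined with the Cauchy–Schwarz bookkeeping in step (1): one has to be careful that after Claim~\ref{claim:csapp} turns each $\sqrt{\mu_{w_i}\mu_{w_{i+1}}}$ into a single $\mu$, the resulting ``active index'' structure is consistent across a whole block of zeros in $s$ (so that a block of $t$ consecutive zeros really does give $t+1$ fresh $\mu$-indices in the worst case, or fewer), and that the geometric summation over the $\lambda$-runs and the free summation over inactive positions do not double-count — i.e.\ that the interleaving count is exactly $\binom{k-1}{i}$ and not larger. Getting the exact constants $\binom{k-1}{i}/((i+1)!(1-\lambda)^{k-i-1})$ rather than something off by a bounded factor is the crux; the inequalities themselves (triangle inequality, Cauchy–Schwarz, $1-\lambda^d \le 1$, $\mu_{w}\le 1$, geometric series) are all routine once the combinatorial structure is set up correctly.
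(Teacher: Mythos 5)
Your overall route is the paper's: bound each monomial via Lemma~\ref{lem:monomial}, decouple the $\sqrt{\mu_{w_i}\mu_{w_{i+1}}}$ factors via Claim~\ref{claim:csapp}, drop the $(1-\lambda^{d_i})$ factors, sum a geometric series over the coordinates sitting inside runs of $1$'s in $s$ to produce $(\lambda/(1-\lambda))^{|s|}$, bound the sum over the remaining $k-|s|$ ``active'' coordinates by $\Phi^{k-|s|}/(k-|s|)!$, and count the $\binom{k-1}{i}$ strings $s$ with $i$ zeros. Your steps (2)--(4), including the self-correction that one must \emph{not} simply replace $\lambda^{w_{j+1}-w_j}$ by $\lambda$ (doing so would leave the inactive coordinates free to range over $[n]$, and the bound would grow with $n$), match the paper's proof.

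The one step that would fail as you describe it is the Cauchy--Schwarz bookkeeping, which is also the step you flag as the main obstacle. Setting $x=y=\sqrt{\mu}$ in Claim~\ref{claim:csapp} gives $P(xy)=P(x^2)=P(y^2)$ and hence no information, and the assertion that each $\mu_{w_j}$ appears with an integer exponent (half its multiplicity) is false: for $k=3$ and $s=(0,1)$ the term is $\mu_{w_1}\sqrt{\mu_{w_2}\mu_{w_3}}\,\lambda^{d_2}$. What the paper actually does is split each square root \emph{asymmetrically}: writing the $(w,s)$-term as $a_{w,s}\,u_{w,s}\,v_{w,s}$ with coefficient $a_{w,s}=\prod_{i:s_i=1}\lambda^{w_{i+1}-w_i}$, $u_{w,s}=\sqrt{\mu_{w_1}}\prod_{i:s_i=0}\sqrt{\mu_{w_{i+1}}}$ and $v_{w,s}=\sqrt{\mu_{w_k}}\prod_{i:s_i=0}\sqrt{\mu_{w_i}}$, and applying Cauchy--Schwarz to the entire double sum over $(w,s)$ at once. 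This yields the maximum of two sums: one in which every pair contributes its right endpoint (together with $\mu_{w_1}$), and one in which every pair contributes its left endpoint (together with $\mu_{w_k}$). This single global two-way choice is exactly what dissolves your consistency worry: there is no per-factor or per-block choice of endpoint to coordinate. In, say, the second term the active index set is precisely $\{w_i : s_i=0\}\cup\{w_k\}$, which has exactly $k-|s|=i+1$ distinct elements, after which the rest of your accounting goes through verbatim.
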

\begin{proof}
By Lemma~\ref{lem:monomial} and Claim~\ref{claim:csapp}, 
\begin{align*}
\E\bracks[\Bigg]{\sum_{w \in W_k} Z_{w_1}Z_{w_2}\cdots Z_{w_k}}
&\leq 
\sum_{w \in W_k} \sum_{s \in \{0, 1\}^{k-1}}
\sqrt{\mu_{w_1}\mu_{w_k}}\parens[\Bigg]{\prod_{i: s_i = 0}\sqrt{\mu_{w_i}\mu_{w_{i+1}}}}
\parens[\Bigg]{\prod_{i: s_i = 1} \lambda^{w_{i+1}-w_i}} \\
&
\begin{aligned}\hspace{1.25pt} \leq \max
\braces[\bigg]{
&\sum_{w \in W_k} \sum_{s \in \{0, 1\}^{k-1}}
\mu_{w_1}\prod_{i: s_i = 0}\mu_{w_{i+1}}
\prod_{i: s_i = 1} \lambda^{w_{i+1}-w_i}
, \\
&\sum_{w \in W_k} \sum_{s \in \{0, 1\}^{k-1}}
\mu_{w_k}\prod_{i: s_i = 0}\mu_{w_{i}}
\prod_{i: s_i = 1} \lambda^{w_{i+1}-w_i}}.
\end{aligned}
\end{align*}
We assume for the remainder of the proof that the second term is the maximum.
A similar proof holds under the assumption that the first term is the maximum.

We will show that for each $s \in \{0, 1\}^{k-1}$, 
\begin{equation}
\sum_{w \in W_k} 
\mu_{w_k}\parens[\Bigg]{\prod_{i: s_i = 0}\mu_{w_i}}
\parens[\Bigg]{\prod_{i: s_i = 1} \lambda^{w_{i+1}-w_i}}
\leq 
\frac{(\mu_1+\cdots+\mu_n)^{k-|s|}\lambda^{|s|}}{{(k-|s|)!}(1-\lambda)^{|s|}},
\label{eq:diffrearrange}
\end{equation}
where $|s|$ is the number of coordinates of $s$ equal to $1$.
This proves the lemma, as there are $\binom{k-1}{k-j-1}$ vectors $s \in \{0, 1\}^{k-1}$ such that $|s| = j$.  
From this point we fix $s$.

Let $w_{\bar{s}}$ be $w$ restricted to the coordinates $i$ such that $s_i = 0$ along with the $k$th coordinate.
Then
\begin{align*}
\sum_{w \in W_k} 
\mu_{w_k}\parens[\Bigg]{\prod_{i: s_i = 0}\mu_{w_i}}
\parens[\Bigg]{\prod_{i: s_i = 1} \lambda^{w_{i+1}-w_i}}
&=
\sum_{v \in W_{k-|s|}} \parens[\Bigg]{\prod_{j=1}^{k-|s|} \mu_{v_j}}
\parens[\Bigg]{\sum_{w:w_{\bar{s}} = v} \prod_{i:s_i = 1} \lambda^{w_{i+1}-w_i}} \\
&\leq
\sum_{v \in W_{k-|s|}} \parens[\Bigg]{\prod_{j=1}^{k-|s|} \mu_{v_j}}
 \parens[\bigg]{\frac{\lambda}{1-\lambda}}^{|s|} \; .
\end{align*}
where the last inequality uses the observation that the function that maps any $w\in W_{k-|s|}$
with $w_{\bar{s}}=v$ to the sequence of positive values $(w_{i+1}-w_i)_{i:s_i=1}$ 
is an injective function.
Finally, Eq.~\eqref{eq:diffrearrange} follows by noting that
\[
\sum_{v \in \binom{[n]}{k-|s|}} \parens[\Bigg]{\prod_{j=1}^{k-|s|} \mu_{v_j}}
\leq
\frac{(\mu_1+\cdots+\mu_n)^{k-|s|}}{(k-|s|)!}.
\]
\end{proof}

The following lemma gives an upper bound on the moments of $S_n$.

\begin{lemma}\label{lem:momentbounddifferent}
Let $G = (V, E)$ be a regular undirected graph, let $\lambda = \lambda(G)$,  
and let $f_1, \ldots, f_n: V \rightarrow [0, 1]$.  
For a random walk 
$(Y_1, \ldots, Y_n)$, let $Z_i = f_i(Y_i)$ for all $i$, and let $S_n = Z_1+\cdots+Z_n$,
and let $\mu_i = \E[Z_i]$, $\Phi = \mu_1+\cdots+\mu_n$.
Then for all positive integers $q$,
\[
\E[S_n^q]
\leq
\sum_{k=1}^{q}\stirling{q}{k}k!\sum_{i=0}^{k-1} \binom{k-1}{i}
\frac{\Phi^{i+1}\lambda^{k-i-1}}{(i+1)!(1-\lambda)^{k-i-1}}
\]
where $\stirling{}{}$ denotes the Stirling number of the second kind.
\end{lemma}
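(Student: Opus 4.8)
The plan is to expand $S_n^q = (Z_1+\cdots+Z_n)^q$ and regroup the resulting sum according to the \emph{set} of distinct indices that appear. Expanding the $q$th power gives $\E[S_n^q] = \sum_{(a_1,\ldots,a_q)\in[n]^q} \E[Z_{a_1}\cdots Z_{a_q}]$. Since each $Z_i$ takes values in $[0,1]$, the $Z_i$ are idempotent as random variables only in the tightness example, but in general we still have $Z_i^m \le Z_i$ pointwise for $m\ge 1$; more to the point, for a fixed tuple $(a_1,\ldots,a_q)$ with underlying set of distinct values $\{w_1<\cdots<w_k\}$ (so $k\le q$), the monomial $Z_{a_1}\cdots Z_{a_q}$ equals $Z_{w_1}^{m_1}\cdots Z_{w_k}^{m_k}$ for some positive multiplicities $m_j$ summing to $q$, and since $Z_{w_j}\in[0,1]$ we have $Z_{w_1}^{m_1}\cdots Z_{w_k}^{m_k} \le Z_{w_1}\cdots Z_{w_k}$. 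Hence $\E[Z_{a_1}\cdots Z_{a_q}] \le \E[Z_{w_1}\cdots Z_{w_k}]$.

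Next I would count, for each fixed $k$ and each fixed increasing tuple $w\in W_k$, how many tuples $(a_1,\ldots,a_q)\in[n]^q$ have exactly that underlying set: this is the number of surjections from a $q$-element set onto a $k$-element set, which is exactly $\stirling{q}{k}k!$ (equivalently $k!\,S(q,k)$ with $S$ the Stirling number of the second kind). Therefore
\[
\E[S_n^q] \;\le\; \sum_{k=1}^{q} \stirling{q}{k} k! \sum_{w\in W_k} \E[Z_{w_1}Z_{w_2}\cdots Z_{w_k}].
\]
Now I apply Lemma~\ref{lem:monomdiff}, which bounds exactly the inner sum $\E\bigl[\sum_{w\in W_k} Z_{w_1}\cdots Z_{w_k}\bigr]$ (note that by linearity of expectation this equals $\sum_{w\in W_k}\E[Z_{w_1}\cdots Z_{w_k}]$) by $\sum_{i=0}^{k-1}\binom{k-1}{i}\frac{\Phi^{i+1}\lambda^{k-i-1}}{(i+1)!(1-\lambda)^{k-i-1}}$. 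Substituting this in gives precisely the claimed bound.

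The only slightly delicate point—the step I'd flag as the one to get right—is the reduction $\E[Z_{a_1}\cdots Z_{a_q}]\le \E[Z_{w_1}\cdots Z_{w_k}]$ when some index is repeated: it relies on $Z_i\in[0,1]$ so that raising to a power only decreases the value, and this must be applied \emph{inside} the expectation (pointwise on sample paths) before taking expectations. Everything else is bookkeeping: the surjection count is standard, and the heavy lifting (the $\lambda$-geometric-sum injection argument and the symmetric-function bound $\sum_{v\in\binom{[n]}{k-|s|}}\prod\mu_{v_j}\le \Phi^{k-|s|}/(k-|s|)!$) has already been done in Lemma~\ref{lem:monomdiff}. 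One should also double-check the edge case $k=q$ (all indices distinct, $m_j=1$, no power reduction needed) and $k=1$ (a single index repeated $q$ times, where $\stirling{q}{1}1!=1$ and $W_1=[n]$), both of which are consistent with the stated formula.
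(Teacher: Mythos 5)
Your proposal is correct and follows essentially the same route as the paper's proof: group the tuples in the expansion of $S_n^q$ by their set of $k$ distinct indices, use $Z_i\in[0,1]$ pointwise to drop repeated factors, count the $\stirling{q}{k}k!$ tuples mapping to each increasing $k$-tuple, and invoke Lemma~\ref{lem:monomdiff}. All steps, including the surjection count and the edge cases you flag, match the paper's argument.
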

\begin{proof}
Consider the subset $D_k \subseteq [n]^q$ of vectors with exactly $k$ distinct coordinates.
Note that
\[\E[S_n^q] = \sum_{w \in [n]^q} \E\bracks[\Bigg]{\prod_{j=1}^q Z_{w_j}} = \sum_{k=1}^q \sum_{w \in D_k} \E\bracks[\Bigg]{\prod_{j=1}^q Z_{w_j}}.\] 
We will upper bound each term on the right-hand side separately.

Fix a $k$, and let $W_k \subseteq [n]^k$ be the set of vectors $w$ so that $w_1 < w_2 < \cdots < w_k$.
Let $\psi: D_k \rightarrow W_k$ be the function 
mapping each $w \in D_k$ to the vector whose coordinates
are exactly those in $w$ in sorted order and without repetition.  Then because $Z_i \in [0, 1]$ for all $i$,
\begin{equation}\label{eq:removerepeats}
\sum_{w \in D_k} \E\bracks[\Bigg]{\prod_{j=1}^q Z_{w_j}}
\leq
\sum_{w \in D_k} \E\bracks[\Bigg]{\prod_{j=1}^k Z_{\psi(w)_j}}.
\end{equation}
Moreover, for all $w \in W_k$ we have $|\psi^{-1}(w)| = \stirling{q}{k}k!$ (as this is the number of ways to partition $q$ labeled balls into $k$ nonempty labeled boxes), and thus Eq.~\eqref{eq:removerepeats} is equal to
\[
\stirling{q}{k}k!\sum_{w \in W_k}\E\bracks[\Bigg]{\prod_{j=1}^k Z_{w_j}}.
\]
The lemma then follows from Lemma~\ref{lem:monomdiff}.
\end{proof}

Finally we can insert the upper bounds from Lemma~\ref{lem:momentbounddifferent} in the
Taylor expansion of $\alpha^{S_n}$ to prove Theorem~\ref{thm:maindiff}.

\begin{proof}[Proof of Theorem~\ref{thm:maindiff}]
By Lemma~\ref{lem:momentbounddifferent}, 
\begin{align}
\E[\alpha^{S_n}] 
&= 
\sum_{q=0}^\infty \frac{\log(\alpha)^q \E[S_n^q]}{q!} \label{eq:taylormgf} \\
&\le
1+\sum_{q=1}^{\infty}\frac{\log(\alpha)^q}{q!} \sum_{k=1}^{q}\stirling{q}{k}k!
\sum_{i=1}^{k} \binom{k-1}{i-1}\frac{\Phi^{i}\lambda^{k-i}}{i!(1-\lambda)^{k-i}}. \nonumber
\end{align}
Rearranging the sums yields 
\begin{equation}
1+\sum_{i=1}^{\infty}\frac{\Phi^{i}}{i!}\sum_{k = i}^{\infty} \binom{k-1}{i-1}\frac{\lambda^{k-i}}{(1-\lambda)^{k-i}}\sum_{q=k}^{\infty} \stirling{q}{k}\frac{\log(\alpha)^q k!}{q!}.
\label{eq:rearrange}\end{equation}
Using the following identity~\cite[Eq.~1.94(b)]{S12}, 
\[\sum_{q=k}^{\infty} \stirling{q}{k}\frac{\log(\alpha)^q}{q!} = \frac{(\alpha-1)^k}{k!},\] 
(which can be seen by writing $\alpha - 1 = e^{\log(\alpha)} - 1$ as $\log(\alpha) + \frac{1}{2!} \log(\alpha)^2 + \frac{1}{3!} \log(\alpha)^3 + \cdots$)
we can rewrite Eq.~\eqref{eq:rearrange} as 
\begin{align} 
1+\sum_{i=1}^{\infty}\frac{\Phi^{i}}{i!} (\alpha-1)^{i} \sum_{k = i}^{\infty} \binom{k-1}{i-1}\frac{\lambda^{k-i}}{(1-\lambda)^{k-i}}{(\alpha-1)^{k-i}}  \; .
\label{eq:identity}
\end{align}
Using the following identity for $0 \le x < 1$,
\[\sum_{j=i}^{\infty} \binom{j-1}{i-1}x^{j-i} = {(1-x)^{-i}},\] 
(which follows from differentiating $\sum_{j=0}^{\infty} x^j = (1-x)^{-1}$ a total of $i-1$ times) 
we can rewrite Eq.~\eqref{eq:identity} as 
\begin{align*}
1+\sum_{i=1}^{\infty}\frac{\Phi^{i}({\alpha-1})^{i}}{i!}
\parens[\bigg]{1-\frac{\lambda(\alpha-1)}{1-\lambda}}^{-i}  
 =
\exp\parens[\bigg]{\Phi\cdot \parens[\bigg]{\frac{(1-\lambda)(\alpha-1)}{1-\alpha\lambda}}} \; .
\end{align*}  
\end{proof}

\paragraph{Acknowledgements}

We thank Assaf Naor for many useful discussions, and Michael Forbes
for referring us to prior work.
We also thank the Oberwolfach Research Institute for Mathematics and the organizers of the 
``Complexity Theory'' workshop there in November 2015 where this work was initiated.

\bibliographystyle{alphaabbrvprelim}
\bibliography{expsamp}

\end{document}